\newtheorem{theorem}{Theorem}[section]
\newtheorem{lemma}[theorem]{Lemma}
\newtheorem{conjecture}{Conjecture}
\begin{document}
\title{Cycles of lengths $3$ and $n-1$ in digraphs under a Bang-Jensen-Gutin-Li type conditon
}
\author{
Zan-Bo Zhang$^{1,2}$\thanks{Email address: eltonzhang2001@gmail.com, zanbozhang@gdufe.edu.cn. Supported by
Guangdong Basic and Applied Basic Research Foundation (2024A1515012286).},
Wenhao Wu$^1$,
Weihua He$^3$\thanks{ Corresponding author.
E-mail address: hwh12@gdut.edu.cn. Supported by National Natural Science Foundation of China (Grant Number: 12426663)}
\\ \small $^1$ School of Statistics and Mathematics, Guangdong University of
\\ \small  Finance and Economics, Guangzhou 510320, China
\\ \small $^2$ Institute of Artificial Intelligence and Deep Learning, Guangdong University
\\ \small of Finance and Economics, Guangzhou 510320, China
\\ \small $^3$ School of Mathematics and Statistics, Guangdong University
\\ \small of Technology, Guangzhou 510006, China
}
\date{}
\maketitle

\begin{abstract}
\noindent
Bang-Jensen-Gutin-Li type conditions are the conditions for hamiltonicity of digraphs
which impose degree restrictions on nonadjacent vertices which have a common in-neighbor
or a common out-neighbor.
They can be viewed as an extension of Fan type conditions in undirected graphs, as well as
generalization of locally (in-, out-)semicomplete digraphs.
Since their first appearance in 1996, various Bang-Jensen-Gutin-Li type conditions for hamitonicity have come forth.
In this paper we establish a condition of Bang-Jensen-Gutin-Li type which implies not only a hamiltonian cycle
but also a $3$-cycle and an $(n-1)$-cycle, with well-characterized exceptional graphs.
We conjecture that this condition implies the existence of cycle of every length.
$\newline$\noindent\textbf{Key words}: Hamiltonian cycle, pancyclicity, Bang-Jensen-Gutin-Li type condition,
Locally semicomplete digraph

\end{abstract}

\section{Introduction and terminology} \label{sec:Intro}

We consider finite digraphs without loops and multiple arcs. Let $D$ be a digraph,
and $u,v\in V(D)$. If there is an arc from $u$ to $v$ we write $u\rightarrow v$,
otherwise $u \nrightarrow v$.
Let $F$ and $H$ be two disjoint subdigraphs of $D$, or two disjoint subset of $V(D)$,
if every vertex in $F$ sends an arc to every vertex in $H$ we write $F\rightarrow H$.
By $d(F,H)$ we denote the number of arcs between $F$ and $H$.
Let $u\in V(D)$ and $F$ a subdigraph of $D$ (it is possible that $u\in V(F)$),
by $d_F(u)$ we mean the number of arcs between $u$ and $V(F)$.
Let $C$ be a cycle and $u,v\in V(C)$ where $u\neq v$,
by $C[u,v]$ we mean the segment of $C$ from $u$ to $v$.
For a path $P$ and $u,v\in V(P)$ where $u\neq v$,
$P[u,v]$ means the segment of $P$ from $u$ to $v$.
A $C$-\emph{bypass} is a $(u,v)$-path $P$ with length at least two,
and endvertices $u$, $v$ but no other vertex on $C$,
where $u\neq v$. When $C$ is clear from the context, we only call $P$ a bypass.
The path $C[u,v]$ is the \emph{gap} of $P$.
The \emph{girth} of $D$ is the length of the shortest cycle in $D$, denoted by $g(D)$.
In an undirected graph, the distance between $u$ and $v$ is denoted by $d(u,v)$.
For terminology not defined here please be referred to \cite{BJG2008}.

Degree conditions are among the most basic conditions for the existence of hamiltonian cycles in graphs.
For digraphs, the most popular hamiltonian degree conditions are possibly
Ghouila-Houri condition and Meyniel condition (see Chapter 6.4 of \cite{BJG2008}), which involve
degree of one vertex and degree sum of two nonadjacent vertices respectively.
In 1996, Bang-Jensen, Gutin and Li (\cite{BJGL1996}) introduced conditions that only impose degree restrictions
on nonadjacent vertices which have a common in-neighbor or a common out-neighbor.
For a pair of nonadjacent vertices $x$ and $y$ in a digraph $D$,
if they have a common in-neighbor (resp. out-neighbor),
we say that they form a nonadjacent dominated pair (resp. a nonadjacent dominating pair).
One of the main theorems of Bang-Jensen, Gutin and Li has the following form.

\begin{theorem} \label{thm:BGLhamilton}
(\cite{BJGL1996})
Let $D$ be a strong digraph on $n\ge 2$ verties. Suppose that,
for every nonadjacent dominated pair $\{x,y\}$,
either $d(x) \ge n$ and $d(y) \ge n - 1$ or $d(x) \ge n - 1$ and $d(y) \ge n$. Then $D$ is hamiltonian.
\end{theorem}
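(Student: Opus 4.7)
The plan is to proceed by contradiction. Suppose $D$ satisfies the hypotheses but is not hamiltonian, and let $C = x_1 x_2 \cdots x_c x_1$ be a longest cycle in $D$ (which exists since $D$ is strong), with $c < n$, and set $H = V(D) \setminus V(C) \ne \emptyset$. The overall strategy is to extract enough degree information from dominated pairs arising from $H$ and $C$ to eventually produce a cycle longer than $C$, contradicting maximality.

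First I would record a \emph{bypass-length lemma}: for any $C$-bypass $P$ with endvertices $u,w \in V(C)$ and at least one internal vertex in $H$, the length of $P$ is strictly less than the length of its gap $C[u,w]$; otherwise, replacing the gap by $P$ yields a cycle of length $\geq c$ that contains a vertex of $H$, contradicting the maximality of $C$. This pins down how long an $H$-path between two prescribed vertices of $C$ can be.

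Next I would manufacture nonadjacent dominated pairs using $H$. Given $v \in H$ and any in-neighbor $x_i \in V(C)$ of $v$, the vertex $x_i$ simultaneously dominates $v$ and $x_{i+1}$. Hence either $v$ and $x_{i+1}$ are adjacent, or $\{v,x_{i+1}\}$ is a nonadjacent dominated pair, in which case the hypothesis gives $d(v)+d(x_{i+1}) \geq 2n-1$. Running this at every $C$-in-neighbor of $v$, together with the symmetric analysis on predecessors of out-neighbors of $v$, supplies many high-degree pairs. Combining this with the bypass-length lemma forces $v$ to be joined to a structured, dense portion of $V(C)$, so that in effect $v$ behaves almost like a vertex of $C$.

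The main obstacle, and the combinatorial heart of the argument, lies in converting this information into an actual longer cycle. One must play off the bypass-length restriction from Step~1 against the degree inequalities from Step~2 to locate two consecutive positions $x_i,x_{i+1}$ on $C$ and a short $H$-path from $x_i$ to $x_{i+1}$ (of length $\geq 2$), which already contradicts Step~1. The Bang-Jensen-Gutin-Li hypothesis is delicate because it activates only on nonadjacent dominated pairs, and its asymmetry ($d(x) \geq n$ and $d(y) \geq n-1$, or vice versa) has to be tracked carefully throughout; in particular, some care is needed when $v$ is adjacent to many successors $x_{i+1}$, since then one must shift the argument to the nonadjacent pairs that remain. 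I would also expect to dispose of small or degenerate cases (very short $C$, or $|H|=1$) by direct ad-hoc arguments before running the main scheme.
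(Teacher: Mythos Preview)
The paper does not contain a proof of this theorem at all: Theorem~\ref{thm:BGLhamilton} is quoted from \cite{BJGL1996} as a known background result and is used only to motivate the paper's own conditions. There is therefore no ``paper's own proof'' to compare your proposal against; the authors simply cite it and move on to proving Theorems~\ref{thm:BGLTriangle} and~\ref{thm:BGL_N-1_Cycle}.

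As for your sketch on its own merits, the outline (longest cycle, $C$-bypasses, dominated pairs forcing high degrees, then a contradiction) is indeed the shape of the original Bang-Jensen--Gutin--Li argument, but what you have written is a plan rather than a proof, and it already contains a slip: in your ``bypass-length lemma'' you claim $|P|$ is \emph{strictly} less than the gap length, arguing that otherwise one gets a cycle of length $\geq c$ containing a vertex of $H$. But a cycle of length exactly $c$ does not contradict the maximality of $C$; you only get a contradiction when $|P|$ exceeds the gap length, so the correct conclusion is $|P|\le |\text{gap}|$, not $<$. More substantively, your Step~3 (``converting this information into an actual longer cycle'') is where the real work lies, and the actual proof in \cite{BJGL1996} uses the multi-insertion technique (cf.\ Lemma~\ref{lem:Multi-Insertion} here) in an essential way; the passage from ``many high-degree dominated pairs'' to ``a short $H$-path between consecutive $x_i,x_{i+1}$'' is not something one can wave through, and your description does not indicate how you would carry it out.
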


The conditions involving nonadjacent dominated pairs and nonadjacent dominating pairs
can be viewed as the extension of the following Fan condition in undirected graphs,
for nonadjacent dominated pairs and nonadjacent dominating pairs in a digraph $D$ are of distance $2$
in the underlying graph of $D$.

\begin{theorem} (\cite{Fan1984})
Let $G$ be a $2$-connected graph on $n \ge 3$ vertices and let $u$ and $v$
be distinct vertices of $G$. If $d(u, v) = 2$ implies that $\max(d(u), d(v)) \ge n/2$,
then $G$ has a hamiltonian cycle.
\end{theorem}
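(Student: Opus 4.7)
The plan is to proceed by contradiction following the classical longest-cycle paradigm. Assume $G$ is $2$-connected and satisfies the Fan hypothesis, yet admits no Hamilton cycle. Let $C$ be a longest cycle of $G$, fix an orientation, and write $v^{+}$, $v^{-}$ for the successor and predecessor along $C$. Since $C$ is not a Hamilton cycle, some component $H$ of $G - V(C)$ is non-empty, and $2$-connectivity together with Menger's theorem supplies two internally disjoint paths from $V(H)$ to $V(C)$. Set $A = \{v \in V(C) : N(v) \cap V(H) \ne \emptyset\}$; then $|A|\ge 2$, and for any distinct $u,w\in A$ there is a $u$-$w$ path $P$ of length at least $2$ whose internal vertices all lie in $V(H)$.

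Next I would choose a pair $u,w\in A$ that is non-adjacent in $G$; the degenerate possibility that every pair in $A$ is adjacent would itself permit a direct splicing of $H$ into $C$ and already contradict maximality. Since the first internal vertex of $P$ is a common neighbour of $u$ and $w$, we have $d_G(u,w)=2$, so Fan's hypothesis yields $\max\{d(u),d(w)\}\ge n/2$; without loss of generality $d(u)\ge n/2$.

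The technical core is a crossing-chord lemma attached to the path $P$. For each $x \in N(u)\cap V(C)$ I would show that $x^{-}\notin N(w)$, since otherwise the chords $ux$ and $x^{-}w$, together with the path $P$ and the appropriate segment of $C$, splice into a cycle strictly longer than $C$, contradicting maximality. This yields an injection
\[
N(u)\cap V(C) \hookrightarrow V(C)\setminus N(w),
\]
and combined with a symmetric bound on $N(w)$ and with control of the neighbours of $u,w$ outside $V(C)$ (again via Menger paths through $H$), I expect to force $d(u)+d(w)<n$, contradicting Fan's inequality applied either to the original pair or to a second distance-$2$ pair which naturally appears along $C$ once $d(u)\ge n/2$ has been absorbed.

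The main obstacle will be the crossing-chord step: making the splicing rigorous requires careful tracking of the cyclic position of $x$ relative to $u$ and $w$ on $C$ and a case analysis on which of the two arcs $C[u,w]$, $C[w,u]$ the vertex $x$ lies in. The degenerate regimes --- $|A|=2$, $V(H)$ a single vertex, or $u$ and $w$ consecutive on $C$ --- will need separate treatment in which the two Menger paths from $H$ to $C$ are used directly rather than through a single common neighbour in $V(H)$.
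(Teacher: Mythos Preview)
The paper does not prove this theorem; it is quoted verbatim from Fan's 1984 paper purely as background motivation for the Bang-Jensen--Gutin--Li conditions, so there is no in-paper proof to compare your attempt against.

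Independently of that, your sketch has a real gap at the step where you claim $d_G(u,w)=2$. You say the first internal vertex of the $u$--$w$ path $P$ through $H$ is a common neighbour of $u$ and $w$, but that vertex is only guaranteed to be adjacent to $u$; it is adjacent to $w$ only when $P$ has length exactly $2$. More structurally, the attachment vertices $u,w\in A$ are the wrong objects to feed into Fan's hypothesis. The non-adjacencies that maximality of $C$ actually forces involve the \emph{successors} of attachment points and the off-cycle vertex: if $v\in V(H)$ has a neighbour $a\in V(C)$, then $a^{+}v\notin E(G)$ and hence $d_G(a^{+},v)=2$ via the common neighbour $a$ --- this is the canonical distance-$2$ pair. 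Your crossing-chord injection $N(u)\cap V(C)\hookrightarrow V(C)\setminus N(w)$ is also stated for the wrong pair; the standard lemma says $(N_C(v))^{+}$ is independent and disjoint from $N(v)$, which bounds the degrees of $v$ and of the successors, not of $u$ and $w$. Finally, even after locating the correct distance-$2$ pairs, Fan's hypothesis only yields $\max\{d(x),d(y)\}\ge n/2$, not a degree sum, so the Ore-style counting you outline cannot close directly; Fan's original argument needs an extra iterative step (propagating the degree condition along $C$) that your plan does not contain.
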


Such conditions are also generalization of three kinds of well studied digraphs, namely,
\emph{locally semicomplete digraphs} (LSD for short),
\emph{locally in-semicomplete digraphs }(LiSD for short)
and \emph{locally out-semicomplete digraphs} (LoSD for short).
An LiSD (resp. LoSD) is a digraph in which the in-neighborhood (resp. out-neighborhood)
of each vertex induces a semicomplete digraph. An LSD is a digraph which is both an LiSD and an LoSD.
An LSD without $2$-cycle is called a \emph{local tournament}.
The hamiltonicity of LiSD, LoSD and LSD has all been verified (\cite{BJHP1993}).

The meta-conjecture of Bondy (\cite{Bondy1973}) that any nontrivial condition for hamiltonicity also
implies pancyclicity up to a small class of exceptional graphs had motivated
the studied on \emph{pancyclicity}, which means the existence of cycle of every length from
$3$ to $|G|$ in a graph $G$. In view of this idea, various Fan type conditions
has been proved to imply pancyclicity.
On the other hand, the pancyclicity of LSD has also been verified (\cite{Guo1995}),
the detail of which is shown below and will be used in our work.

A digraph on $n$ vertices is called a \emph{round digraph} if we can label its
vertices $v_0$, $v_1, \dots , v_{n-1}$ such that for each $i$,
$N^+(v_i) = \{v_{i+1}, \dots , V_{i+d^+(v_i)} \}$ and
$N^-(v_i) = \{v_{i-d^-(v)}, \dots , v_{i-1}\}$, where the subscripts are taken modulo $n$.
An LSD $D$ is said to be \emph{round decomposable}, if it can be represented in the form
$D = R[S_1, S_2, \dots , S_r]$, where $R$ is a local tournament with $r$ vertices
$\{v_1, v_2, \dots, v_r\}$, and $D$ is obtained from $R$ by replacing every vertex
$v_i$ with a strong semicomplete digraph $S_i$, every arc $v_iv_j$
with all arcs from $V(S_i)$ to $V(S_j)$.
$D = R[S_1, S_2, \dots , S_r]$ is called the \emph{round decomposition} of $D$.

\begin{theorem} \label{thm:nonpancyclicLSD} (\cite{Guo1995})
A strong LSD $D$ is pancyclic if and only if it is not of the form
$D = R[S_1, S_2, \dots , S_r]$, where $R$ is a round local tournament with
$g(R) > max\{2, |V(S_1)|,|V(S_2)|, ... ,|V(S_r)|\} +1$.
\end{theorem}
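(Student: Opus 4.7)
The plan is to invoke the structural theory of strong LSDs: by a known decomposition theorem (Bang-Jensen, Huang, Guo), every strong LSD is either round decomposable in the sense of the theorem statement, or belongs to a narrowly delimited family of non-round-decomposable LSDs with abundant short cycles. Both directions of the characterization are then handled through this dichotomy, combined with Moon's theorem that a strong semicomplete digraph on $n\ge 2$ vertices is pancyclic (with cycles of every length from $2$ to $n$).

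For the necessity (``only if'') direction, suppose $D=R[S_1,\ldots,S_r]$ with $g(R)>m+1$ where $m:=\max\{2,|V(S_1)|,\ldots,|V(S_r)|\}$. Any cycle $C$ of $D$ either lies inside some single $S_i$, in which case $|C|\le |V(S_i)|\le m$, or contracts to a cycle of $R$ of length at least $g(R)>m+1$, so the number of distinct $S_i$'s met by $C$ is at least $g(R)$, and hence $|C|\ge g(R)>m+1$. No cycle of length $m+1$ can exist, so $D$ is not pancyclic.

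For the sufficiency (``if'') direction, I would split on the decomposition type of $D$. If $D$ is round decomposable as $D=R[S_1,\ldots,S_r]$ with $g(R)\le m+1$, take a shortest cycle $C_R=v_{i_1}v_{i_2}\cdots v_{i_{g(R)}}v_{i_1}$ of $R$. Cycles lying in a single $S_i$ already realize every length in $[2,|V(S_i)|]$ by Moon's theorem; along $C_R$, I can successively inflate each visited $S_{i_j}$ (using its pancyclicity) to realize every length from $g(R)$ up to $\sum_j|V(S_{i_j})|$; then, continuing by absorbing further $S_j$'s along longer cycles of $R$ (guaranteed by $D$ being strong and $R$ being a local tournament, hence strong), I cover all lengths up to $n$. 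The failure of the girth condition exactly bridges the gap between the ``inside an $S_i$'' lengths and the ``around $R$'' lengths. If $D$ is not round decomposable, by the structural classification $D$ must contain a specific induced configuration that forces short cycles of several consecutive lengths; combining this with a Hamiltonian cycle of $D$ (which exists by the hamiltonicity of strong LSDs) and bypass-extension arguments yields cycles of every length.

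The main obstacle, I expect, is the non-round-decomposable case: it relies on a substantial structure theorem, and the bypass-extension step must be carried out carefully to realize \emph{every} length, not just many. In the round-decomposable case, the delicate part is verifying that the union of ``inside-$S_i$ lengths'' $[2,m]$ and ``inflated $C_R$-lengths'' $[g(R),\,\sum|V(S_{i_j})|]$, together with further inflations along larger $R$-cycles, indeed exhausts $[2,n]$ precisely when the stated girth inequality fails.
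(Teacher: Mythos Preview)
The paper does not prove this theorem at all: it is quoted from Guo's 1995 monograph (reference \cite{Guo1995}) and used only as a black box, chiefly to define the exceptional class $\mathcal{D}_L$ and later to infer that a triangle-free strong LSD must lie in $\mathcal{D}_L$ with all $|S_i|\le 2$. Consequently there is no ``paper's own proof'' to compare your attempt against.

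That said, a few remarks on your sketch. Your necessity argument is sound: any cycle of $D=R[S_1,\dots,S_r]$ either sits inside a single $S_i$ (hence has length at most $m$) or projects to a genuine cycle of the round local tournament $R$ (hence has length at least $g(R)>m+1$), so the length $m+1$ is missed. Two small inaccuracies: Moon's theorem gives cycles of every length from $3$ (not $2$) to $|S_i|$ in a strong semicomplete digraph, and the case $m=2$ has to be read as ``the missing length is $3$'', which your argument still handles. For sufficiency, your outline correctly identifies the two ingredients---the round-decomposable case where one inflates a shortest $R$-cycle and then successively absorbs further $S_j$'s, and the non-round-decomposable case handled via the Bang-Jensen--Guo--Huang structure theorem---but both parts are only gestured at. In particular, in the round-decomposable branch you need the fact that a strong round local tournament has cycles of every length from $g(R)$ to $r$ (not merely a Hamiltonian cycle), so that the absorption of further blocks can proceed one length at a time; and in the non-round-decomposable branch the ``bypass-extension'' step you allude to is exactly the substantial work carried out in Guo's monograph and cannot be compressed into a sentence.
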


It then sounds natural to ask for a pancyclic condition of Bang-Jensen-Gutin-Li type.
Some attempts have been made. For example,
Darbinyan and Karapetyan (\cite{DK2017}) have proved the existence of cycle of length $n-1$,
under the condition of Theorem \ref{thm:BGLhamilton} with an additional semidegree bound.
They also point out that the existence of a cycle of length $n-1$ often
helps to prove pancyclicity (say in \cite{HT1976, Overbeck1977}).
However, a potential difficulty to prove pancyclicity lays in the following fact.
The condition in Theorem \ref{thm:BGLhamilton} poses degree restriction on nonadjacent
dominated pairs only, which generalizes the class of LiSD.
Hence any result that the condition in Theorem \ref{thm:BGLhamilton} implies pancyclicity
must have the class of non-pancyclic LiSD as an exceptional class.
But a complete characterization of non-pancyclic LiSD's seems to be difficult and has not been found.

Therefore, it makes sense to firstly consider Bang-Jensen-Gutin-Li conditions that
involve both nonadjacent dominated pairs and nonadjacent dominating pairs, which generalize
LSD. So we conjecture the following.
Denote by $\mathcal{D}_L$ the class of non-pancyclic LSD's defined in Theorem \ref{thm:nonpancyclicLSD},
and by $\mathcal{D}_B$ the class of balanced complete bipartite digraphs with at least $4$ vertices.

\begin{conjecture} \label{conj:BGLpancyclic}
Let $D$ be a digraph with order $n\ge 3$. Suppose that for any $u\in V(D)$ belonging to a
nonadjacent dominated pair or a nonadjacent dominating pair, $d(u)\ge n$,
then $D$ is pancyclic, unless $D \in \mathcal{D}_L$ or $D \in \mathcal{D}_B$.
\end{conjecture}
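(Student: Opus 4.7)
The plan is to split by whether $D$ is a locally semicomplete digraph (LSD). If $D$ is LSD, Theorem \ref{thm:nonpancyclicLSD} yields the conclusion directly: the only obstructions are the round-decomposable LSD's constituting $\mathcal{D}_L$. If $D$ is not LSD, then a genuine nonadjacent dominated or dominating pair exists, and the hypothesis $d(u)\ge n$ forces both endpoints of such pairs to have very dense neighborhoods, which can be leveraged repeatedly.

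In the non-LSD case, the paper's main theorem already supplies a hamiltonian cycle, a $3$-cycle, and an $(n-1)$-cycle, so what remains is to produce a $k$-cycle for every $4\le k\le n-2$. I would aim for an extension-type lemma: whenever $D$ contains a $k$-cycle with $3\le k\le n-1$, it contains a $(k+1)$-cycle, unless $D\in \mathcal{D}_B$. Given such a $k$-cycle $C=v_0v_1\cdots v_{k-1}v_0$ and a vertex $x\notin V(C)$, a direct insertion argument succeeds whenever $v_i\to x\to v_{i+1}$ for some $i$. Failing that, the arcs between $x$ and $V(C)$ create nonadjacent dominated or dominating pairs $(x,v_j)$, so both $d(x)\ge n$ and $d(v_j)\ge n$; a careful count of arcs along $C$ and out to $V(D)\setminus V(C)$ should force a $C$-bypass $P$ of length $\ell$ with gap of length $g$ satisfying $k-g+\ell=k+1$, producing the desired $(k+1)$-cycle. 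The $\mathcal{D}_B$ exception should appear precisely when the arcs respect a bipartition and no odd cycle is available, which one recognizes from the pattern of forced nonadjacencies.

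The main obstacle, and presumably the reason the authors state this as a conjecture rather than prove it, is the absence of a structural decomposition analogous to the round decomposition of LSD's. The degree condition only activates at genuinely nonadjacent dominated/dominating pairs, so $D$ may satisfy it vacuously at most pairs, which defeats uniform counting. I expect the middle regime $k\approx n/2$ to be the thorniest: neither the ``long cycle with chords'' heuristic (effective when $k$ is close to $n$, via chords of the hamiltonian cycle) nor the ``short cycle with external attachments'' heuristic (effective when $k$ is close to $3$) is clearly dominant. A plausible refinement of the plan is to prove downward extension ($k\to k-1$) starting from $C_n$ in parallel with upward extension from $C_3$, meeting in the middle, and to handle the residual cases by a fine analysis of how a non-LSD satisfying the hypothesis can avoid both $\mathcal{D}_L$ and $\mathcal{D}_B$; any such residual configuration should be forced to contain enough small cycles and bypasses to complete the spectrum.
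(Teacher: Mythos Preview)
The statement you are attempting is labeled as a \emph{conjecture} in the paper, and the paper does not supply a proof of it; it only establishes the two special cases $k=3$ and $k=n-1$ (Theorems~\ref{thm:BGLTriangle} and~\ref{thm:BGL_N-1_Cycle}) as supporting evidence. You correctly note this yourself, so there is no ``paper's own proof'' to compare your proposal against.

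What you have written is therefore a research outline rather than a proof, and the gaps you flag are real. The central one is the extension step: your claim that a failed insertion plus the degree hypothesis ``should force a $C$-bypass $P$ of length $\ell$ with gap of length $g$ satisfying $k-g+\ell=k+1$'' is exactly the point where the argument has no content yet. In the paper's proof of Theorem~\ref{thm:BGL_N-1_Cycle}, obtaining even a \emph{single} bypass with controlled gap from the degree hypothesis required a delicate case analysis (Cases~1.1, 1.2, and~2 of Section~\ref{sec:Proof_N-1}), and that was for the easiest value $k=n-1$ where only one or two vertices lie off $C$. For intermediate $k$ the set $V(D)\setminus V(C)$ is large, the degree bound $d(x)\ge n$ distributes arcs across many external vertices, and there is no reason the counting forces a bypass with gap exactly one less than its length. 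Your proposed two-front strategy (upward from $C_3$, downward from $C_n$) does not obviously help, since both directions face the same obstacle in the middle range.

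A smaller technical point: your LSD branch assumes $D$ is strong so that Theorem~\ref{thm:nonpancyclicLSD} applies, but the conjecture as stated does not include strongness as a hypothesis; you would need to either derive it or note that non-strong LSD's satisfying the condition vacuously must be handled separately (or argue the conjecture is implicitly about strong digraphs, as the paper's theorems are).
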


The condition of the conjecture implies that of Theorem \ref{thm:BGLhamilton},
thus implies the existence of a hamiltonian cycle.
In this paper, we prove that this condition also implies
the existence of cycle of length $3$ and $n-1$, which supports the conjecture.
As mentioned above a cycle of length $n-1$ is often helpful to prove pancyclicity.
On the other hand, if we prove pacyclicity by induction on the cycle length,
then the existence of $3$-cycle is the basic step. So both results may be useful
to prove Conjecture \ref{conj:BGLpancyclic}.
The main results of the paper is the following two theorems, which are proved
in subsequent sections.

\begin{theorem} \label{thm:BGLTriangle}
Let $D$ be a strong digraph with order $n\ge 3$. Suppose that for every vertex $u$ which belongs to a nonadjacent
dominating pair or a nonadjacent dominated pair, $d(u)\ge n $, then $D$ has a $3$-cycle,
unless $n$ is even and $D \in \mathcal{D}_B$, or $B \in \mathcal{D}_L$ and every strong semicomplete digraph $S$ in the
round decomposition of $D$ satisfies $|S|\le 2$.
\end{theorem}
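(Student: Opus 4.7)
Because the hypothesis here is strictly stronger than that of Theorem~\ref{thm:BGLhamilton}, $D$ already contains a hamiltonian cycle $C$. Our plan is to suppose, for contradiction, that $D$ has no $3$-cycle and to conclude that $D$ must lie in one of the two exceptional families. The natural first dichotomy is whether $D$ is a locally semicomplete digraph.

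If $D$ has no nonadjacent dominated pair and no nonadjacent dominating pair at all, then by definition every in- and out-neighborhood of $D$ induces a semicomplete digraph, so $D$ is an LSD. Because every strong semicomplete digraph on at least three vertices contains a $3$-cycle, our standing assumption forces $D$ to be non-pancyclic; Theorem~\ref{thm:nonpancyclicLSD} then places $D$ in $\mathcal{D}_L$ with round decomposition $D=R[S_1,\ldots,S_r]$, and the same $3$-cycle observation applied inside each $S_i$ forces $|V(S_i)|\le 2$. This is exactly the second exceptional case.

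In the remaining case, $D$ admits a nonadjacent dominated or dominating pair $\{x,y\}$, and the hypothesis supplies $d(x),d(y)\ge n$. For any vertex $u$, partition $V(D)\setminus\{u\}$ into the set $A_u$ of $2$-cycle partners of $u$, the set of strict in-neighbors of $u$, the set of strict out-neighbors of $u$, and the set $N_u$ of non-neighbors of $u$; the identity $d(u)=2|A_u|+(n-1-|A_u|-|N_u|)$ turns $d(u)\ge n$ into $|A_u|\ge|N_u|+1$. In particular each of $x$ and $y$ has strictly more $2$-cycle partners than non-neighbors, and at least two $2$-cycle partners. Combining this with the no-$3$-cycle constraint in the form $N^-(u)\cap N^+(v)=\emptyset$ for every arc $u\to v$, the plan is to propagate the equality $N^+(u)=N^-(u)$ outward from $\{x,y\}$: every arc must then lie on a $2$-cycle, and $V(D)$ splits into two classes joined entirely by $2$-cycles. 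A final application of the degree bound to any candidate missing cross-arc or to any size imbalance between the two classes yields a contradiction, placing $D$ in $\mathcal{D}_B$ with $n$ even.

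The main obstacle we anticipate is this propagation step in the last case: lifting the purely local inequality $|A_u|\ge|N_u|+1$ at the single pair $\{x,y\}$ to a global bipartition of $V(D)$, and simultaneously ruling out both missing cross-arcs and side-imbalance. We expect the hamiltonian cycle $C$ to play the central role here, since traversing $C$ forces chains of nonadjacent dominated or dominating pairs to which the degree hypothesis can be applied repeatedly, while the no-$3$-cycle rule tightly constrains the arcs that such a chain can produce.
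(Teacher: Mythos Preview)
Your dichotomy is sound, and your first case is actually cleaner than the paper's: you observe directly that the absence of any nonadjacent dominated or dominating pair is \emph{by definition} the statement that $D$ is locally semicomplete, whereas the paper reaches the same conclusion only after a detour through ``longest two-way path has length $\le 1$'' and a separate degree-counting argument.

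The second case, however, is only a plan, and the plan as stated is missing the key lemma. Your counting $|A_u|\ge |N_u|+1$ is correct and useful, but the assertion that one can ``propagate the equality $N^+(u)=N^-(u)$ outward'' is not justified: nothing you have written forces every arc at $x$ or $y$ to be a $2$-cycle, only that a majority are. The hamiltonian cycle you invoke is a red herring; the paper never uses it in this proof.

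What is missing is the following observation, which the paper isolates as a lemma: if there is no $3$-cycle and $u,v$ lie on a $2$-cycle, then $d(u)+d(v)\le 2n$, with equality iff every other vertex has exactly two arcs to $\{u,v\}$. Your own counting already produces the object on which to apply this: since $|A_x|\ge 2$ and any two $2$-cycle partners $a,b$ of $x$ must be nonadjacent (else a $3$-cycle through $x$), the path $axb$ is a \emph{two-way path} of length $2$. The paper takes a longest such two-way path $P=u_0u_1\cdots u_k$, observes that each $\{u_i,u_{i+2}\}$ is a nonadjacent dominated pair so every $d(u_i)\ge n$, and then the degree-sum bound forces $d(u_i)=n$ for all $i$ and, via the equality case, $N^+(u_j)=N^+(u_{j+2})$ and $N^-(u_j)=N^-(u_{j+2})$. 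This is the precise ``propagation'' mechanism: it makes $\langle V(P)\rangle$ a complete bipartite digraph, and maximality of $P$ together with strong connectivity then gives $V(P)=V(D)$ and $D\in\mathcal{D}_B$. Without the degree-sum lemma and the two-way-path organization, your outline does not close.
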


When $n=3$, by the definition of pancyclicity only $3$-cycle is required, thus in the next theorem
we assume $n\ge 4$.
\begin{theorem}\label{thm:BGL_N-1_Cycle}
Let $D$ be a strong digraph with order $n\ge 4$.
Suppose that for every vertex $u$ which belongs to a nonadjacent
dominating pair or a nonadjacent dominated pair, $d(u)\ge n $,
then $D$ has a cycle of length $n-1$, unless $n$ is even and $D \in \mathcal{D}_B$, or $D=C_n$
(which belongs to $\mathcal{D}_L$).
\end{theorem}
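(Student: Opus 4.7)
The plan is to start from a Hamilton cycle $C = v_0 v_1 \cdots v_{n-1} v_0$ of $D$, whose existence is guaranteed by Theorem~\ref{thm:BGLhamilton} (the hypothesis of Theorem~\ref{thm:BGL_N-1_Cycle} clearly implies that of Theorem~\ref{thm:BGLhamilton}). An $(n-1)$-cycle of $D$ is nothing but a Hamilton cycle of $D - v_k$ for some $k$, and the cleanest way to produce one is to look for a ``skip-one'' chord $v_i \to v_{i+2}$ (indices modulo $n$): such a chord immediately yields the $(n-1)$-cycle $v_i v_{i+2} v_{i+3} \cdots v_{i-1} v_i$. The first step is therefore to search for such a chord; if one exists we are done, and the real content of the theorem lies in the opposite case --- showing that when no skip-one chord is present, $D$ must coincide with $C_n$ or lie in $\mathcal{D}_B$ (with $n$ even).

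Assume from now on $v_i \nrightarrow v_{i+2}$ for every $i$. Then each pair $\{v_i, v_{i+2}\}$ is either nonadjacent or joined only by the backward arc $v_{i+2} \to v_i$. If $D$ has no chord at all, then $D = C_n$ and we are in one of the listed exceptions. Otherwise, $D$ contains some chord $v_a \to v_b$ with $b - a \not\equiv 1 \pmod n$. The plan is then to use any such extra arc to exhibit a nonadjacent pair $\{v_i, v_{i+2}\}$ admitting a common in- or out-neighbour, which triggers the BGL degree bound $d(v_i), d(v_{i+2}) \ge n$; propagating the resulting forced arcs around $C$ should ultimately realise $V(D)$ as a bipartition into even- and odd-indexed vertices, with complete arcs running both ways between the two parts. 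This is precisely $D \in \mathcal{D}_B$, and in particular forces $n$ even.

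The main obstacle will be precisely this propagation step: ensuring that a single extra chord cascades into the full bipartite structure without leaving room for hybrid configurations that are neither $C_n$ nor in $\mathcal{D}_B$. Three ingredients I expect to be essential are: first, deriving from the joint hypotheses (no skip-one chord plus the degree bound) that several further short chord types --- notably backward arcs of length one and forward chords of length three --- are also forbidden, since each of them would create a common neighbour for a nonadjacent pair $\{v_i,v_{i+2}\}$ and force new arcs through the $d(\cdot)\ge n$ bound; second, exploiting the full degree bound $d(\cdot) \ge n$ to saturate $2$-cycles between the two candidate parts; and third, handling the parity of $n$ separately, since for odd $n$ the bipartite conclusion is unavailable and one must push the chord-forbidding argument all the way to $D = C_n$.
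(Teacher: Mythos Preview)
Your plan diverges sharply from the paper's proof, and the divergence is not merely stylistic: the step you label ``propagation'' is where essentially all of the work lies, and your outline does not supply a mechanism for carrying it out. Concretely, your ``first ingredient'' --- forbidding backward length-one arcs $v_{i+1}\to v_i$ --- cannot be right as stated: in the target exception $\overleftrightarrow{K}_{n/2,n/2}$ every Hamilton-cycle arc has its reverse, so such arcs are present, not forbidden. Likewise, forward chords of length three do not produce an $(n-1)$-cycle (they give an $(n-2)$-cycle), so they are not directly excluded either; at best they trigger the degree bound on some pair, which is exactly the kind of cascading you would then have to control. What you actually need is a global argument showing that once a single chord exists and no $(n-1)$-cycle is present, the degree hypothesis forces \emph{every} vertex to lie in a nonadjacent dominated or dominating pair (hence $\delta(D)\ge n$), after which one can invoke a known classification. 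Your sketch does not get there: a chord $v_a\to v_b$ first produces the pair $\{v_{a+1},v_b\}$, but that pair may already be adjacent, and you give no device for pushing the argument around the whole cycle in that case.

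The paper proceeds by an entirely different route. It never analyses chords of the Hamilton cycle. Instead it takes a \emph{longest nonhamiltonian} cycle $C$ (which exists since $D\ne C_n$), shows there is a $C$-bypass, picks one with minimum gap, and then splits into $|C|\le n-3$ versus $|C|=n-2$. The first case is handled with the multi-insertion technique (Lemma~\ref{lem:Multi-Insertion}) together with degree-counting on the gap and on $R=D\setminus C$; the second case reduces to the H\"aggkvist--Thomassen lemma (Lemma~\ref{lem:n-2Cycle}), after a short argument using Lemma~\ref{lem:wToC} to force $d(v_0),d(v_1)\ge n$ for the two off-cycle vertices. The advantage of working with the longest nonhamiltonian cycle is precisely that it converts the problem into an extendability question with ready-made tools, rather than an open-ended chord-forcing cascade on the Hamilton cycle.
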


\section{Proof of Theorem \ref{thm:BGLTriangle}: the existence of a $\boldsymbol{3}$-cycle}

We need the following lemma to prove the theorem.
\begin{lemma}  \label{lem:DegreeSumLe2n}
Let $D$ be a strong digraph without directed $3$-cycle, and $u,v\in V(D)$.
If $uv, vu\in A(D)$, then $d(u)+d(v)\le 2n$, and equality holds iff for every
$x \in V(D)\backslash \{u,v\}$, $d(\{u,v\},x)=2$.
\end{lemma}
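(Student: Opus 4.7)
The plan is to fix $u,v$ with $uv,vu \in A(D)$ and bound the contribution of each other vertex $x$ to $d(u)+d(v)$ by analyzing which of the four possible arcs $ux$, $xu$, $vx$, $xv$ can coexist without creating a directed $3$-cycle through $u$ and $v$.

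First, I would observe the following. Together with the arc $vu$, the arcs $ux$ and $xv$ would form the $3$-cycle $u \to x \to v \to u$; together with the arc $uv$, the arcs $vx$ and $xu$ would form the $3$-cycle $v \to x \to u \to v$. Hence the set of arcs between $\{u,v\}$ and any $x \in V(D)\setminus\{u,v\}$ must avoid both of the pairs $\{ux,xv\}$ and $\{vx,xu\}$. A short case check on subsets of $\{ux,xu,vx,xv\}$ shows that a $3$-element subset necessarily contains one of these forbidden pairs, so at most two of these four arcs can be present, i.e.\ $d(\{u,v\},x) \le 2$.

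Next I would evaluate $d(u)+d(v)$ by splitting arcs incident to $\{u,v\}$ into those between $u$ and $v$ and those to the remaining $n-2$ vertices. The two arcs $uv$ and $vu$ each contribute $1$ to $d(u)$ and $1$ to $d(v)$, for a total of $4$. Every other arc incident to $u$ or $v$ is counted in exactly one $d(\{u,v\},x)$. Therefore
\[
d(u)+d(v) = 4 + \sum_{x \in V(D)\setminus\{u,v\}} d(\{u,v\},x) \le 4 + 2(n-2) = 2n,
\]
and equality holds precisely when $d(\{u,v\},x)=2$ for every $x \in V(D)\setminus\{u,v\}$, which gives the characterization of equality.

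There is no real obstacle here: the only subtlety is making sure the case analysis for the forbidden arc configurations is exhaustive (in particular, ruling out all three-arc subsets), and being careful with the accounting of the two arcs $uv, vu$ in the degree sum. The strong connectivity hypothesis is not used in the proof itself; it is presumably inherited from the ambient setting in which the lemma will be applied.
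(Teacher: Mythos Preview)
Your proof is correct and follows essentially the same approach as the paper: bound $d(\{u,v\},x)\le 2$ for each $x\notin\{u,v\}$ via the two forbidden $3$-cycle configurations, then sum. The only cosmetic difference is that the paper observes directly that $|A(D)\cap\{ux,xv\}|\le 1$ and $|A(D)\cap\{vx,xu\}|\le 1$, and since these two pairs partition the four possible arcs the bound $2$ is immediate without the separate case check on $3$-element subsets; your remark that strong connectivity is unused is also accurate.
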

\begin{proof}
For every $x\in V(D)\backslash \{u,v\}$, we can not have $u\rightarrow x \rightarrow v$ or
$v\rightarrow x \rightarrow u$.
Therefore $|A(D)\cap \{ux, xv\}|\le 1$, $|A(D)\cap \{vx, xu\}|\le 1$ and thus
$d(\{u,v\},x)\le 2$, and so $d(u)+d(v)\le 2+2+2(n-2)=2n$.
If equality holds then $d(\{u,v\},x)= 2$ for every $x\in V(D)\backslash \{u,v\}$.
\end{proof}

Suppose that there is no $3$-cycle in $D$.
If both $u_0u_1\dots u_{k}$ and $u_{k}\dots u_1u_0$ are directed paths in $D$, we call
$u_0u_1\dots u_{k}$ a \emph{two-way path}, and a \emph{two-way arc} if $k=1$.
Now let's divide our discussion into two cases with respect to the longest two-way path in $D$.

\noindent Case 1. The longest two-way path in $D$ is of length at most $1$.

If there is no two-way path at all, then every vertex is of degree at most $n-1$.
By the condition of the theorem, there can not be any nonadjacent dominating pair or nonadjacent dominated pair,
and hence $D$ is an LSD.

If there is at least one two-way path of length $1$, say $uv$ in $D$, then we can prove that $u$ and $u$
have the same in-neighborhood and out-neighborhood in $D\backslash\{u,v\}$.
For otherwise, without lose of generality assume that there exists a vertex
$x$ such that $u\rightarrow x$ but $v \nrightarrow x$, since $D$ is $3$-cycle-free, $x \nrightarrow v$.
But then $\{x,v\}$ is a nonadjacent dominated pair, and by the condition of the theorem $d(v)\ge n$.
By Lemma \ref{lem:DegreeSumLe2n}, $d(u)+d(v)\le 2n$. Thus $d(v)\ge n \ge d(u)$.
Let $w$ be any vertex that is adjacent to both $u$ and $v$, we have that $d(u,w)=d(v,w)=1$.
Since $x$ is adjacent to $u$ but not $v$ and $d(v)\ge d(u)$,
there must exist another vertex $y$ which is adjacent to $v$ but not $u$.
But then $\{u,y\}$ forms a nonadjacent dominating pair or a nonadjacent dominated pair.
Hence, $d(u)\ge n$ and thus we have $d(u)=d(v)=n$.
Now equality in Lemma \ref{lem:DegreeSumLe2n} holds, and we have $d(\{u,v\},x)=2$.
Then we have $x\rightarrow u$ and $xuv$ is a two-way path of length $2$, a contradiction.
Thus, $u$ and $v$ have the same in-neighborhood and out-neighborhood in $D\backslash\{u,v\}$.

Now we can let $N^+_{uv}$ and $N^-_{uv}$ be the out-neighborhood and in-neighborhood
of both $u$ and $v$ in $D\backslash \{u,v\}$. Since there is no $3$-cycle, there can not be any arc
from $N^+_{uv}$ to $N^-_{uv}$.
By the strongness of $D$, there exists a vertex $x \notin \{u,v\} \cup N^+_{uv} \cup N^-_{uv}$
such that there is a path from $N^+_{uv}$ to $x$ and a path from $x$ to $N^-_{uv}$. Then
$$d(u)=d(v)=|N^+_{uv}|+|N^-_{uv}|+2\le n-3+2=n-1.$$

For any vertex $w$ which is not associate with a two-way arc, we also have $d(w)\le n-1$.
Hence $\delta(D)\le n-1$. By the condition of the theorem there can not be any nonadjacent dominating pair
and nonadjacent dominated pair in $D$. Thus $D$ is an LSD.

Now it has been shown that $D$ is an LSD in all cases. Since $D$ does not contain any $3$-cycle,
$D$ is not pancyclic and hence $D\in \mathcal{D}_L$.
Furthermore, every strong semicomplete digraph $S$ is pancyclic, i.e.,
contains cycle of every length from $3$ to $|S|$ (Theorem 1.5.3 of \cite{BJG2008}).
Hence, for every semicomplete digraph in the round decomposition of $D$,
we must have $|S|\le 2$.
$\newline$

\noindent Case 2. The longest two-way path in $D$ is of length at least $2$.

Firstly we claim that the longest two-way path is actually of length at least $3$ in this case.
Let $uvw$ be a two-way path. Since $D$ is $3$-cycle-free, $u$ and $w$ can not be adjacent.
Thus $\{u,w\}$ is a nonadjacent dominating pair, and $d(u),d(w)\ge n$.
But then $d(u, V(D)\backslash\{u,v,w\})\ge n-2$, and there is at least one vertex $x \in V(D)\backslash\{u,v,w\}$
which is adjacent to $u$ with a two-way arc. So $xuvw$ is a two-way path of length $3$.

Let $P=u_0u_1\dots u_{k}$ be a longest two-way path in $D$ with $k\ge 3$.
Since there is not $3$-cycle, for $0\le i \le k-2$, $u_i$ can not be adjacent to $u_{i+2}$ and
$\{u_i, u_{i+2}\}$ forms a nonadjacent dominated pair.
Hence $d(u_i)\ge n$ and $u_{i+2}\ge n$.
When $i$ runs from $0$ to $k-2$, we actually have $d(u_j)\ge n$ for $0\le j \le k$.
Together with Lemma \ref{lem:DegreeSumLe2n}, we have that $2n\le d(u_i)+d(u_{i+1})\le 2n$ for $0 \le i \le k-1$,
thus $d(u_j)=n$ for $0\le j \le k$.

Furthermore, for $0\le i \le k-1$, by Lemma \ref{lem:DegreeSumLe2n},
$d(\{u_i,u_{i+1}\}, x)=2$ for all $x\in V\backslash \{u_i,u_{i+1}\}$, and since
there is no $3$-cycle, we must have $A(D)\cap \{u_ix, xu_{i+1}\} = A(D)\cap \{u_{i+1}x, xu_{i}\} = 1$.
Thus, for $0\le j \le k-2$ and $y\in V\backslash\{u_j, u_{j+1}, u_{j+2}\}$,
if $u_j \rightarrow y$,
then $y \not \rightarrow u_{j+1}$ and $u_{j+2}\rightarrow y$,
and vice versa. Also note that $u_j$ and $u_{j+2}$ are nonadjacent.
Therefore,
$u_{j+2}$ and $u_j$ have the same out-neighborhood, that is
\begin{equation} \label{eqn:OutdegreeEqualforTwo}
N^+(u_j)=N^+(u_{j+2}).
\end{equation}
Let $j$ run through $0$ to $k-2$, we have
\begin{equation} \label{eqn:OutdegreeEqual}
N^+(u_0)=N^+(u_2)=\dots = N^+(u_{2\lfloor k/2 \rfloor}) \mbox{ and }
N^+(u_1)=N^+(u_3)=\dots = N^+(u_{2\lfloor (k-1)/2 \rfloor+1}).
\end{equation}
Similar equalities hold for in-degree of the vertices on $P$,
\begin{equation} \label{eqn:IndegreeEqual}
N^-(u_0)=N^-(u_2)=\dots = N^-(u_{2\lfloor k/2 \rfloor}) \mbox{ and }
N^-(u_1)=N^-(u_3)=\dots = N^-(u_{2\lfloor (k-1)/2 \rfloor+1}).
\end{equation}

Since $u_j$ and $u_{j+2}$ are not adjacent for any $0\le j \le k-2$,
 and $u_i$ and $u_{i+1}$ is connected by a two-way arc for any $0\le i \le k-1$,
(\ref{eqn:OutdegreeEqual}) and (\ref{eqn:IndegreeEqual}) imply that
the vertex subset $N_0=\{u_0, u_2, \dots, u_{2\lfloor k/2 \rfloor}\}$ is independent,
the vertex subset $N_1=\{u_1, u_3, \dots, u_{2\lfloor (k-1)/2 \rfloor+1}\}$ is independent as well,
and that all vertex in $N_0$ is adjacent to all vertex in $N_1$ by two-way arcs, or equivalently,
$\langle V(P) \rangle$ is a complete bipartite digraph.

Let $x\in V(D)\backslash V(P)$. Since $\langle V(P) \rangle$ is complete bipartite,
$x$ can not be adjacent to any vertices in $P$ through a two-way arc,
or we find a two-way path containing all vertices in $V(P)\cup \{x\}$, contradicting that $P$ is the longest.
However, for any $u_{2i}, u_{2j+1} \in V(P)$, $u_{2i}$ and $u_{2j+1}$ is connected by a two-way arc,
and $d(u_{2i})+d(u_{2i+1})=n+n=2n$, hence by Lemma \ref{lem:DegreeSumLe2n}, $d(\{u_{2i}, u_{2j+1}\}, x)=2$.
So $d(u_{2i},x)=d(u_{2j+1},x)=1$, and since there is no $3$-cycle,
either $x\rightarrow \{u_{2i},u_{2j+1}\}$ or $\{u_{2i},u_{2j+1}\}\rightarrow x$.
Since $u_{2i}$ and $u_{2j+1}$ are arbitrarily chosen, we have either
$x\rightarrow V(P)$ or $V(P)\rightarrow x$.

Let $N_P^+$ and $N_P^-$ be the set of $x\in V(D)\backslash V(P)$ such that
$V(P)\rightarrow x$ and $x\rightarrow V(P)$, respectively.
Then $N_P^+\cup N_P^-=V\backslash V(P)$. Since there is no $3$-cycle,
there can not be any arc from $N_P^+$ to $N_P^-$. Since $V(P)\cup N_P^+ \cup N_P^- = V(D)$,
$D$ can not be strong unless $N_P^+=N_P^-=\emptyset$.
Thus $V(D)=V(P)$, and $D$ is a complete bipartite digraph, furthermore since each vertex is of degree $n$,
$D$ must be a balance complete bipartite digraph, which completes the proof of Case 2 and the whole theorem.

\section{Proof of Theorem \ref{thm:BGL_N-1_Cycle}: the existence of an $\boldsymbol{(n-1)}$-cycle}\label{sec:Proof_N-1}
In the proof of this section, we will need several preliminary results. The first one is the
multi-insertion technique, which is a powerful tool for tackling paths and cycles problems in graphs.
A detail explanation and application of multi-insertion technique
in digraphs can be found in Chapter 6.4.2 of \cite{BJG2008}.
To maintain the integrity of the paper we briefly introduce the related concepts
and ideas here.

Let $P = u_0 u_2 \dots u_s$ be a path in a digraph $D$ and
let $Q = v_0 v_2 \dots v_t$ be a path in $D\backslash V(P)$.
The path $P$ is said to be able to be \emph{inserted into} $Q$ if there is an integer
$0\le i \le t-1$ such that $v_i \rightarrow u_1$ and $u_s \rightarrow v_{i+1}$,
in the sense that the path $Q$ can be extended to a new $(v_0 ,v_t)$-path
$Q[v_1 ,v_i ]PQ[v_{i+1} ,v_t ]$ with vertex set $V(P)\cup V(Q)$. Next, the path $P$ can
be \emph{multi-inserted into} $Q$ if there are integers $i_1 = 0 < i_2 < \dots < i_m = s+1$
 such that, for every $k \in \{1,2,\dots,m\}$, the subpath $P[u_{i_{k-1}} ,u_{i_k-1}]$ can be
inserted into $Q$. For the case that $Q$ is a cycle similar definitions can be given.

The definition of multi-insertion means that $P$ can be partition into subpaths each
of which can be inserted into $Q$.
The key result here is that under this assumption,
all vertices of $P$ can then be inserted into $Q$ to form a path
with vertex set $V(P)\cup V(Q)$ and the same endvertices with $Q$ (or merely a cycle
with vertex set $V(P)\cup V(Q)$, when $Q$ is a cycle).

\begin{lemma} \label{lem:Multi-Insertion} (The Multi-Insertion Technique)
Let $P$ be a path in a digraph $D$ and let $Q = v_0 v_1 \dots v_t$ be a path (a
cycle, respectively) in $D\backslash V(P)$. If $P$ can be multi-inserted into $Q$, then there
is a $(v_0 ,v_t)$-path $R$ (a cycle, respectively) in $D$ so that $V (R) = V (P)\cup V (Q)$.
\end{lemma}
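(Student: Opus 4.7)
Plan. I would prove this by induction on $m$, the number of subpaths in the multi-insertion partition of $P$. The base case $m = 1$ is immediate from the definition of insertion itself: here $P = P_1$ is insertable into $Q$ at some position $j$, and splicing gives the $(v_0, v_t)$-path $Q[v_0, v_j] \cdot P \cdot Q[v_{j+1}, v_t]$ (or the analogous cycle) with vertex set $V(P) \cup V(Q)$.

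For the inductive step, my approach is to first insert the initial subpath $P_1$ of $P$ into $Q$ at its designated position $j_1$, producing a new path (or cycle) $Q^{(1)}$ on vertex set $V(Q) \cup V(P_1)$ with the same endpoints as $Q$. The remaining portion of $P$ is the path $P' = P_2 P_3 \cdots P_m$, lying in $D \setminus V(Q^{(1)})$ and inheriting a natural partition into $m - 1$ consecutive subpaths.

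The central step is to verify that $P'$ can be multi-inserted into $Q^{(1)}$ using the subpaths $P_2, \ldots, P_m$, so that the inductive hypothesis applies and produces the required $R$. For each $k \ge 2$ with $j_k \ne j_1$, the arc $v_{j_k} v_{j_k + 1}$ is undisturbed in $Q^{(1)}$, so $P_k$ is still insertable at position $j_k$. When $j_k = j_1$, the original insertion arc has been consumed, and one must search inside the expanded segment of $Q^{(1)}$ (previously the single arc $v_{j_1} v_{j_1 + 1}$, now a longer sub-path routed through $P_1$) for an alternative position; here the assumed insertion arcs for $P_k$, combined with the arc of $P$ that joins the last vertex of $P_1$ to the first vertex of $P_2$, are used to produce such a position.

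The main obstacle is precisely this conflict case where $j_k = j_1$ for some $k \ge 2$. Handling it requires a careful analysis exploiting that $P_1$ and the following subpaths are linked through arcs of $P$ itself, which is why the partition into \emph{consecutive} subpaths, rather than an arbitrary vertex decomposition, is essential. Once the reinsertion step is justified, the induction closes and yields the $(v_0, v_t)$-path (or cycle) $R$ with $V(R) = V(P) \cup V(Q)$.
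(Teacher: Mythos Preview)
The paper does not actually prove this lemma; it is quoted as a standard tool with a pointer to Chapter~6.4.2 of \cite{BJG2008}, so there is no in-paper argument to compare against. I will therefore just assess your plan on its own.

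Your inductive scheme has a real gap in the conflict case. You propose to insert $P_1$ at position $j_1$, obtain $Q^{(1)}$, and then argue that $P'=P_2\cdots P_m$ is multi-insertable into $Q^{(1)}$ \emph{with the inherited partition} $P_2,\ldots,P_m$. For a piece $P_k$ with $j_k=j_1$ you invoke ``the arc of $P$ that joins the last vertex of $P_1$ to the first vertex of $P_2$''. That arc only links $P_1$ to $P_2$; it does nothing for $P_k$ when $k\ge 3$. Concretely, take $Q=v_0v_1v_2$, $P=u_0u_1u_2$ with pieces $P_1=u_0$, $P_2=u_1$, $P_3=u_2$ at positions $j_1=0$, $j_2=1$, $j_3=0$, and no arcs beyond those forced by $P$, $Q$ and these three insertions. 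After inserting $P_1$ you get $Q^{(1)}=v_0u_0v_1v_2$, and now $P_3=u_2$ is \emph{not} insertable into $Q^{(1)}$: the consecutive pairs are $(v_0,u_0),(u_0,v_1),(v_1,v_2)$, and for none of them do both required arcs exist. So the inherited partition fails, even though the lemma is true here (one may take $P'=u_1u_2$ as a single piece and insert it between $u_0$ and $v_1$).

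The clean fix is not to preserve the partition but to simplify it first: whenever two pieces $P_k$ and $P_{k'}$ (with $k<k'$) share the same position $j_k=j_{k'}$, the entire stretch $P_k P_{k+1}\cdots P_{k'}$ is itself insertable at $j_k$, since $v_{j_k}\to(\text{first of }P_k)$ and $(\text{last of }P_{k'})\to v_{j_{k'}+1}=v_{j_k+1}$. Repeating this reduction, one may assume the positions $j_1,\ldots,j_m$ are pairwise distinct, and then all pieces can be spliced into $Q$ simultaneously without conflict to produce $R$. Your outline is in the right spirit, but as written the re-insertion step does not cover $k\ge 3$ with $j_k=j_1$.
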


The next lemma is two straightforward facts that frequently used in various literatures. We state
it without a proof.
\begin{lemma} \label{lem:VertexInsertToPath}
Let $D$ be a digraph.
\begin{enumerate}[label=(\alph*)]
\item Let $P=u_0u_1\dots u_{p-1}$ be a path with $|P| \ge 2$ in $D$ and $v\in V(D)\backslash V(P)$.
If $v$ can not be inserted into $P$ then $d_P(v)\le |P|+1$. If we further have
$v\nrightarrow u_0$ or $u_{p-1}\nrightarrow v$, then $d_P(v)\le |P|$.
\item Let $Q$ be a cycle with $|Q| \ge 3$ in $D$ and $v\in V(D)\backslash V(Q)$.
If $v$ can not be inserted into $Q$ then $d_Q(v)\le |Q|$.
\end{enumerate}
\end{lemma}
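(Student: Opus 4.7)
The plan is to prove both parts by a uniform arc-counting argument based on indicator variables. For each $i$ set $\chi^-(i)=1$ if $u_i\rightarrow v$ and $0$ otherwise, and $\chi^+(i)=1$ if $v\rightarrow u_i$ and $0$ otherwise, so that $d_P(v)=\sum_{i=0}^{p-1}(\chi^-(i)+\chi^+(i))$, and analogously for $Q$.

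For part (a), the hypothesis that $v$ cannot be inserted into $P$ says precisely that for every $i\in\{0,1,\dots,p-2\}$ we do not have both $u_i\rightarrow v$ and $v\rightarrow u_{i+1}$, i.e.\ $\chi^-(i)+\chi^+(i+1)\le 1$. Summing over $i=0,\dots,p-2$ yields
\[
\sum_{i=0}^{p-2}\chi^-(i)+\sum_{j=1}^{p-1}\chi^+(j)\le p-1.
\]
The full quantity $d_P(v)$ is recovered by adding back the two boundary indicators $\chi^-(p-1)$ and $\chi^+(0)$, giving $d_P(v)\le p+1=|P|+1$. If furthermore $v\nrightarrow u_0$ then $\chi^+(0)=0$, and if $u_{p-1}\nrightarrow v$ then $\chi^-(p-1)=0$; in either case one boundary term vanishes and the bound sharpens to $|P|$.

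For part (b), the cyclic structure of $Q=v_0v_1\dots v_{q-1}v_0$ means the non-insertion condition applies to every consecutive pair $(v_i,v_{(i+1)\bmod q})$ for $i\in\{0,\dots,q-1\}$. Summing $\chi^-(i)+\chi^+((i+1)\bmod q)\le 1$ over all $q$ values of $i$ counts each $\chi^-(i)$ and each $\chi^+(j)$ exactly once, so $d_Q(v)\le q=|Q|$ with no boundary correction needed.

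I do not expect any substantive obstacle here: the statement is essentially a bookkeeping identity, and the only conceptual point worth flagging is the reason part (a) carries an extra $+1$ while part (b) does not, namely that the two endpoints of the path fail to participate in a consecutive pair on one side, whereas on a cycle every vertex participates on both sides.
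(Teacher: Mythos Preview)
Your proof is correct. The paper itself does not prove this lemma at all: it introduces it as ``two straightforward facts that [are] frequently used in various literatures'' and explicitly states it without a proof. Your indicator-variable bookkeeping is the standard way to make this precise, and your explanation of why the path picks up the extra $+1$ (the two boundary terms $\chi^+(0)$ and $\chi^-(p-1)$ are not covered by any consecutive-pair constraint) while the cycle does not is exactly the point.
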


\begin{lemma} \label{lem:n-2Cycle} (\cite{HT1976}, Lemma 2)
Let $D$ be a strong digraph containing a cycle $C=u_0u_1\dots u_{n-3}u_0$ of length $n-2$,
where $n = |D|$. Let $v_0$, $v_1$ be the vertices not contained in $C$.
\begin{enumerate} [label=(\alph*)]
\item If $d(v_0), d(v_1)\ge n$ and $D$ does not contain a cycle of length $n-1$,
then $n$ is even and the notation may be chosen such that $v_0$ dominates and is dominated by
precisely $u_1,u_3\dots, u_{n-3}$,
and $v_1$ dominates and is dominated by precisely $u_0,u_2,\dots, u_{n-4}$.

\item If $D$ satisfies that $\delta(D)\ge n$ and contains no cycle of length $n-1$,
then $n$ is even and $D$ is isomorphic to $\overleftrightarrow{K}_{n/2,n/2}$.
\end{enumerate}
\end{lemma}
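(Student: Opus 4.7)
The plan for part (a) is to first observe that if some $v_i$ can be inserted into $C$—that is, if there is an arc $u_j u_{j+1}$ of $C$ with $u_j\to v_i$ and $v_i\to u_{j+1}$—then replacing $u_j u_{j+1}$ by the path $u_j v_i u_{j+1}$ produces an $(n-1)$-cycle, which is forbidden. Hence neither $v_0$ nor $v_1$ can be inserted into $C$, and Lemma \ref{lem:VertexInsertToPath}(b) yields $d_C(v_i)\le n-2$. Since $d(v_i)\ge n$ and at most two arcs run between $v_0$ and $v_1$, we must have $d_C(v_i)=n-2$ and both arcs $v_0v_1,\,v_1v_0$ present. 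Counting locally, each pair $\{u_jv_i,\,v_iu_{j+1}\}$ contributes at most one arc (otherwise insertion), so summing over the $n-2$ values of $j$ gives an upper bound of $n-2$, which is attained by $d_C(v_i)$. Therefore, for every $j$ and $i\in\{0,1\}$, exactly one of $u_j\to v_i$ and $v_i\to u_{j+1}$ holds.

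Next, encode the choice for $v_0$ as $a_j\in\{0,1\}$ (with $a_j=1$ meaning $u_j\to v_0$ and $a_j=0$ meaning $v_0\to u_{j+1}$), and similarly $b_j$ for $v_1$. Using the 2-cycle on $\{v_0,v_1\}$, the configuration $a_j=1$ together with $b_{j+1}=0$ produces the path $u_j\to v_0\to v_1\to u_{j+2}$, which closes via $C[u_{j+2},u_j]$ into a cycle of length $3+(n-4)=n-1$, contradicting the hypothesis. So $a_j=1$ forces $b_{j+1}=1$, and by the symmetric argument $b_j=1$ forces $a_{j+1}=1$. Iterating this cascade, if any $a_j=1$ then $b_{j+1}=a_{j+2}=b_{j+3}=\cdots=1$ along the cyclic index set. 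When $n-2$ is odd, $\gcd(2,n-2)=1$ so the cascade reaches every position, forcing $a_l=b_l=1$ for all $l$, which leaves $\{v_0,v_1\}$ with no out-arc to $C$ and violates strongness; hence $n$ is even. When $n-2$ is even, the cascade covers exactly one parity class; a second cascade triggered by any $a_l=1$ at the opposite parity (or $b_l=1$ at the same parity) would again force the all-ones configuration, so only one parity class receives $1$'s. After the relabeling permitted by the lemma, this delivers $v_0\leftrightarrow u_{\text{odd}}$ and $v_1\leftrightarrow u_{\text{even}}$, i.e.\ the alternating bipartite pattern claimed.

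For part (b), the condition $\delta(D)\ge n$ together with Ghouila-Houri's theorem makes $D$ Hamiltonian, and a standard rerouting using the minimum-degree bound (shortcutting two consecutive vertices along a Hamilton cycle) produces an $(n-2)$-cycle $C$ with exactly two exterior vertices $v_0,v_1$, at which point part (a) applies. This gives $n$ even and the bipartition $A=\{v_0\}\cup\{u_{2k}\}$, $B=\{v_1\}\cup\{u_{2k+1}\}$ of size $n/2$ each, on which $C$ and the 2-cycle $v_0v_1$ already live as bipartite arcs. To finish, one rules out any chord inside $A$ or $B$—such a chord would let us skip exactly one $u$-vertex of $C$ and reinsert $v_0$ or $v_1$ to produce an $(n-1)$-cycle—and then $\delta(D)\ge n$ forces every bipartite pair to carry a 2-cycle, yielding $D\cong\overleftrightarrow{K}_{n/2,n/2}$. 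The main obstacle is the cascade-plus-strongness argument in part (a): extracting the rigid alternating pattern from only the local non-insertion and non-$(n-1)$-cycle constraints, which requires careful bookkeeping of the interaction between the two binary sequences $a_j$ and $b_j$ on a cyclic index set whose parity behaviour drives the entire conclusion.
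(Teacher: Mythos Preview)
The paper does not supply its own proof of this lemma: it is quoted verbatim from H\"aggkvist--Thomassen \cite{HT1976} and used as a black box in Case~2 of Section~\ref{sec:Proof_N-1}. So there is no in-paper argument to compare your proposal against; I will comment on the proposal itself.

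Your argument for part~(a) is sound. The non-insertability bound $d_C(v_i)\le n-2$ together with $d(v_i)\ge n$ forces both arcs $v_0v_1,v_1v_0$ and the ``exactly one of $u_j\to v_i$, $v_i\to u_{j+1}$'' dichotomy; the cascade $a_j=1\Rightarrow b_{j+1}=1\Rightarrow a_{j+2}=1$ via the forbidden $(n-1)$-cycle $u_j v_0 v_1 u_{j+2} C[u_{j+2},u_j]$ is correct, and the parity/strongness dichotomy then pins down the alternating pattern as claimed.

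Part~(b), however, has two problems. First, you misread the hypotheses: the lemma already assumes $D$ contains an $(n-2)$-cycle $C$, so the detour through Ghouila-Houri and a ``standard rerouting'' to manufacture $C$ is unnecessary (and your rerouting sketch is not actually justified). Second, and more seriously, your chord-elimination step is incomplete. Saying ``such a chord would let us skip exactly one $u$-vertex of $C$ and reinsert $v_0$ or $v_1$'' only handles chords $u_iu_{i+2}$ between vertices at cyclic distance~$2$; for a chord such as $u_0\to u_4$ (both in the same colour class) the cycle you build omits three consecutive $u$-vertices, and reinserting a single $v_j$ does not reach length $n-1$. A clean way to finish is to iterate part~(a): for each $i$, replace $u_i$ on $C$ by the appropriate $v\in\{v_0,v_1\}$ (the one forming $2$-cycles with $u_{i-1}$ and $u_{i+1}$) to obtain a new $(n-2)$-cycle $C_i$ with exterior vertices $u_i$ and the other $v$; applying part~(a) to $C_i$ shows $u_i$ has $2$-cycles exactly with the $u_j$ of opposite parity and no arcs with those of the same parity, which together with the already-known structure at $v_0,v_1$ forces $D\cong\overleftrightarrow{K}_{n/2,n/2}$.
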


The below lemma shows that in a digraph satisfying the condition we raise,
a vertex adjacent to a longest nonhamiltonian cycle satisfies certain nice structure or degree conditions.

\begin{lemma} \label{lem:wToC}
Let $D$ be a strong digraph with order $n \ge 4$. Suppose that for every vertex $u$ which
belongs to a nonadjacent dominating pair or a nonadjacent dominated pair, $d(u) \ge n$.
Let $C$ be a longest nonhamiltonian cycle in $D$ with $|C| \le n-2$, and $w\in V(D)\backslash V(C)$
be adjacent to some vertex on $C$. If $d(w)\le n-1$, then either $w\rightarrow C$ or $C\rightarrow w$.
\end{lemma}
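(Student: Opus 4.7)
The plan is to combine two observations: (i) non-insertability of $w$ into $C$, forced by the maximality of $C$ among non-hamiltonian cycles, and (ii) local adjacency constraints on $w$ forced by $d(w)\le n-1$. Since $|C|+1\le n-1<n$, inserting $w$ between some consecutive pair of $C$ would produce a non-hamiltonian cycle longer than $C$, a contradiction; hence no index $i$ satisfies both $u_i\to w$ and $w\to u_{i+1}$. Moreover, because $d(w)\le n-1$, the vertex $w$ belongs to no nonadjacent dominated or dominating pair, so using the arcs $u_{i-1}\to u_i$ and $u_i\to u_{i+1}$ of $C$ one obtains two propagation rules: (a) if $w\to u_i$, then $u_i$ is a common out-neighbor of $w$ and $u_{i-1}$, so $w$ must be adjacent to $u_{i-1}$; (b) if $u_i\to w$, then $u_i$ is a common in-neighbor of $w$ and $u_{i+1}$, so $w$ must be adjacent to $u_{i+1}$.

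The main step is to show that $w$ is adjacent to every vertex of $C$. Assume not; since $w$ is adjacent to some vertex of $C$, there is a maximal run $u_a, u_{a+1}, \ldots, u_b$ of vertices adjacent to $w$ with $u_{a-1}$ and $u_{b+1}$ both nonadjacent to $w$. Applying the contrapositive of (a) with $i=a$, $w\nrightarrow u_a$ (otherwise $u_{a-1}$ would be adjacent to $w$), so $u_a\to w$; symmetrically the contrapositive of (b) with $i=b$ gives $u_b\nrightarrow w$, so $w\to u_b$. If $a=b$ this is already a contradiction. Otherwise, starting from $u_a\to w$, non-insertability at the arc $u_a u_{a+1}$ forces $w\nrightarrow u_{a+1}$, hence $u_{a+1}\to w$; iterating along the run yields $u_\ell\to w$ and $w\nrightarrow u_\ell$ for every $a\le\ell\le b$, contradicting $w\to u_b$.

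Therefore $w$ is adjacent to every vertex of $C$. If some $u_i\to w$, non-insertability forces $w\nrightarrow u_{i+1}$, so $u_{i+1}\to w$; iterating around $C$ gives $C\to w$. Otherwise $w\to u_i$ for every $i$, i.e., $w\to C$. The main obstacle is the run-propagation step: one must read off the correct ``direction'' of $w$ at each end of a run from the local rules, then exploit non-insertability to push a single direction along the entire run, which clashes with the opposite direction forced at the other end.
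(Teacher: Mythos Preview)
Your proof is correct and uses exactly the same two ingredients as the paper: non-insertability of $w$ into $C$ (from maximality of $C$) and the fact that $d(w)\le n-1$ forces $w$ to be adjacent to the other member of any potential dominated/dominating pair. The paper's argument is slightly more economical in that it merges your two phases into one: starting from, say, $u_0\to w$, it uses non-insertability to get $w\nrightarrow u_1$ and the dominated-pair rule to force adjacency with $u_1$ \emph{in the same step}, yielding $u_1\to w$ directly and then iterating around $C$; your separate maximal-run argument to first establish adjacency everywhere is therefore not needed, but it does no harm.
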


\begin{proof}(of Lemma \ref{lem:wToC})
Let $C=u_0u_1\dots u_{k-1}$. By the condition of the lemma $w$ is adjacent to at least one vertex on $C$,
say $u_0 \rightarrow w$. Since $C$ is the longest nonhamiltonian cycle with $|C|\le n-2$, $w$ can not
be inserted into $C$, hence $w\nrightarrow u_1$. But $w$ and $u_1$ are dominated by $u_0$, and by the
condition of the lemma and $d(w)\le n-1$, they must be adjacent. So $u_1\rightarrow w$. Repeatedly
apply the above arguments we conclude that $u_i \rightarrow w$, for $0\le i \le k-1$, that is,
$C\rightarrow w$. Similarly if we assume that $w\rightarrow u_0$ at the beginning we will get
$w\rightarrow C$.
\end{proof}

Now we are ready to prove Theorem \ref{thm:BGL_N-1_Cycle}. We firstly prove the theorem for $n\in \{4,5\}$.
By above discussion there is a hamiltonian cycle in $D$.

If $n=4$ and $D$ is not isomorphic to $C_4$ (note that $C_4\in \mathcal{D}_B\cap \mathcal{D}_L$),
then $D$ has a chord, which will produce a $3$-cycle, as request.

If $n=5$ and and $D$ is not isomorphic to $C_5$.
We denote the hamiltonian cycle in $D$ by $C=u_0u_1u_2u_3u_4u_0$,
then $C$ has at least one chord.
Assume by contradiction that $D$ does not has a $4$-cycle,
then every chord $u_iu_{i+2}$ of $C$ does not exist,
or we have a $4$-cycle $u_iu_{i+2}\dots u_{i}$, where the
subscript are taken modulo $n$. Thus there can only be chords of the form $u_iu_{i-2}$.
(1) Now suppose that $C$ has only one chord, say $u_2u_0$,
then $u_0$ and $u_3$ form a nonadjacent dominated pair, hence $d(u_3)\ge 5$.
But there is no chord with end $u_3$, thus $d(u_3)\le 4$, a contradiction.
Therefore $C$ has at least two chords. (2) If there are two chords associate with one vertex,
say we have the chords $u_2u_0$ and $u_0u_3$, then we have a path $u_1u_2u_0u_3u_4$,
and thus we can not have the chords $u_3u_1$ and $u_4u_2$, or a $4$-cycle can be formed.
But then $\{u_1,u_3\}$ and $\{u_2,u_4\}$ are nonadjacent dominated or dominating pairs.
Thus $d(u_i) \ge 5$ for $i\in \{1,2,3,4\}$. Then the reverse arc of every arc on $C$ must exist,
and so as $u_1u_4$. Now there can be quite a few $4$-cycles in $D$, say $u_4u_3u_2u_1u_4$,
again a contradiction. (3) If there are at least three chords, we must have two chords with
the same end which has been discussed in (2).
(4) So there are exactly two chords which are without a common endvertex, say $u_0u_2$
and $u_3u_1$, then $u_4$ and $u_1$ forms a nonadjacent dominated pair, and $d(u_4)\ge 5$,
but $u_4$ is not associate with any chord and hence $d(u_4)\le 4$, a contradiction,
which completes the proof for $n=5$.
\newline

Now we assume $n\ge 6$ and there is no $(n-1)$-cycle in $D$.
We further assume that $D$ is not isomorphic to $C_n$.
Note that the condition of the theorem imply a hamiltonian cycle.
Since $D$ is not isomorphic to $C_n$, there is a chord on the hamiltonian cycle which produces
a shorter cycle. Thus we can a find nonhamiltonian cycle in $D$.
Let $C=u_0u_1\dots u_{k-1}u_0$ be the longest nonhamiltonian cycle in $D$, where $3\le k \le n-2$.

We claim that there exists a $C$-bypass in $D$.
Suppose to the contrary that there is no $C$-bypass.
By the strongness of $D$ there exists a cycle $Q$ with $|V(Q)\cap V(C)|=1$.
W.l.o.g. we may further assume the common vertex of $Q$ and $C$ to be $u_0$
and denote the successor of $u_0$ on $Q$ by $v$.
Now any arc between $v$ and $u_\beta$ with $\beta \ne 0$ will produce a $C$-bypass and
leads to a contradiction, hence $d_C(v) \le 2$. Meanwhile, $\{v, u_{1}\}$
is a nonadjacent dominated pair and thus $d(v), d(u_{1})\ge n$ by the condition
of the theorem.
For any $w\in V(R)\backslash v$, if $v\rightarrow w \rightarrow u_1$ or
$u_1\rightarrow w \rightarrow v$ we have a path between $v$ and $ u_1$,
which again forms a $C$-bypass with $Q[u_0,v]$ or $Q[v,u_0]$, a contradiction.
Therefore $d(w,\{v, u_1\})\le 2$, and in total
$d(V(R)\backslash v,\{v, u_{1}\})\le 2(|R|-1)$.
Then
$$2n\le d(v)+d(u_{1})= d_C(v)+d_C(u_{1})+d(V(R)\backslash v,\{v, u_{1}\}))
\le 2+2(|C|-1)+2(|R|-1)=2n-2,$$
a contradiction. Hence a $C$-bypass must exists.
\newline

We take a $C$-bypass $P=x_0x_1\dots x_{l}$ with the minimum gap, and
w.l.o.g. we assume  $x_0=u_0$ and let $x_{l}=u_\alpha$, where $l \ge 2$ is the
length of the bypass, and $\alpha\ge 1$ is the length of the gap.
Further, let $C'=C[u_1, u_{\alpha-1}]$ and $C''=C[u_\alpha, u_{0}]$.
We divide our discussion into the cases that $|C|\le n-3$ and $|C|=n-2$.
\newline

\textbf{Case 1}. $|C|\le n-3$, then $|R|\ge 3$.
We further divide this case into two, according to whether we can
choose $P$ so that $V(R)\backslash V(P) \ne \emptyset$. $\newline$

\textbf{Case 1.1}. There exists a bypass $P$ with minimum gap and $|V(R)\backslash V(P)| \ge 1$.

In this case we have $\alpha \ge 2$, for if $\alpha=1$ then $P$ can be inserted into $C$ to
form a longer nonhamiltonian cycle (because $|V(R)\backslash V(P)| \ge 1$), contradicting the choice of $C$.
And by $\alpha \ge 2$, $C'$ contains at least one vertex $u_1$.

Since $P$ has the minimum gap with respect to $C$,
$x_1$ can not be adjacent to any vertex $u_\beta$ on $C'$,
and there is no vertex $w$ in $V(R)\backslash\{x_1\}$
such that $x_1 \rightarrow w \rightarrow u_\beta$ or $u_\beta \rightarrow w \rightarrow x_1$,
therefore we have
\begin{equation} \label{eqn:sumonC'}
d_{C'}(x_1)+d_{C'}(u_\beta) \le 2(|C'|-1),
\end{equation}

and
\begin{equation} \label{eqn:sumonR}
d_{R}(x_1)+d_{R}(u_\beta) \le 2(|R|-1)=2(n-k-1),
\end{equation}

Since $C$ is the longest nonhamiltonian cycle with $|C| \le n-3$,
$x_1$ can not be inserted into $C''$, by Lemma \ref{lem:VertexInsertToPath} (a),
\begin{equation} \label{eqn:x_1degreeonC''}
d_{C''}(x_1)\le |C''|+1
\end{equation}

Now we show that for any $u_\gamma$ on $C'$ such that $u_0\rightarrow u_\gamma$
(there is at least one such $u_\gamma$, that is, $u_1$),
\begin{equation} \label{eqn:degreeonC''}
d_{C''}(u_\gamma)\ge |C''|+3.
\end{equation}

The fact that $x_1$ and $u_\gamma$ are nonadjacent and both dominated by $u_0$ implies
that $d(x_1), d(u_\gamma)\ge n$. Substituting $\beta$ for $\gamma$ in (\ref{eqn:sumonC'}) and (\ref{eqn:sumonR}),
and together with (\ref{eqn:x_1degreeonC''}), we have
\begin{equation} \label{eqn:sum}
\begin{split}
2n &\le d(x_1)+d(u_\gamma) \\
&= d_{R}(x_1)+d_{R}(u_\gamma)+d_{C'}(x_1)+d_{C'}(u_\gamma)+ d_{C''}(x_1)+d_{C''}(u_\gamma) \\
&\le 2n-|C''|-3+d_{C''}(u_\gamma)
\end{split}
\end{equation}
which implies (\ref{eqn:degreeonC''}).

Note that $u_0\rightarrow u_1$ hence $d_{C''}(u_1)\ge |C''|+3$, thus $u_1$ can be inserted into $C''$.
If $C'$ can be multi-inserted into $C''$ then by Lemma \ref{lem:Multi-Insertion} we obtain a cycle $Q$
with $V(Q)=V(C)\cup V(P)$,
Since $|V(R)\backslash V(P)| \ge 1$, $Q$ is nonhamiltonian but longer than $C$,
contradicting the assumption we have made on $C$.
Therefore $C'$ can not be multi-inserted into $C''$.

Since $u_1$ can be inserted into $C''$, there must exist an $\eta \in \{2, \dots, \alpha-1 \}$
such that $C[u_1, u_{\eta-1}]$ can be multi-inserted into $C''$, but $C[u_1, u_{\eta}]$ cannot.
In particular, by Lemma \ref{lem:Multi-Insertion},
$u_\eta$ cannot be inserted into $C''$. Then by (\ref{eqn:degreeonC''}), $u_0 \nrightarrow u_\eta$,
and by Lemma \ref{lem:VertexInsertToPath} (a), $d_{C''}(u_\eta)\le |C''|$.
Together with (\ref{eqn:sumonC'}), (\ref{eqn:sumonR}) and (\ref{eqn:x_1degreeonC''})
we have $d(x_1)+d(u_\eta) \le 2n-3$, but we already have $x_1\ge n$, so
\begin{equation} \label{eqn:degreeofu_eta}
d(u_\eta)\le n-3.
\end{equation}

By the definition of multi-insertion, there exist $u_j$ on $C[u_1,u_{\eta-1}]$
and $u_i$ on $C[u_\alpha, u_{k-1}]$ such that $u_i \rightarrow u_j$ and $u_{\eta-1} \rightarrow u_{i+1}$.
Now $u_\eta$ and $u_{i+1}$ are both dominated by $u_{\eta-1}$, by (\ref{eqn:degreeofu_eta})
and the condition of the theorem, they must be adjacent.
If $u_\eta \rightarrow u_{i+1}$, then $C[u_1 ,u_\eta ]$ can be inserted into $C''$, contradicting our assumption.
Hence $u_{i+1}\rightarrow u_\eta$.
Considering $u_\eta$ and $u_{i+2}$, by analog argument we conclude that $u_{i+2}\rightarrow u_\eta$.
Continuing this process, we finally conclude that $u_0\rightarrow u_\eta$, contradicting the above
argument that $u_0 \nrightarrow u_\eta$, and complete the proof of Case 1.1.
$\newline$

\textbf{Case 1.2}. For every bypass $P$ with minimum gap, $V (R)\backslash V (P)=\emptyset$.

In this case, it is possible that $\alpha =1$ which means $C'$ is empty,
for the insertion of $P$ into C results in a hamiltonian cycle, which will not
cause any contradiction like that in Case 1.1.
Now we firstly assume $\alpha \ge 2$, then  $|C'|\ge 1$ and $|C''|\le k-1$.

Let's estimate the degree of $x_1$.
Since $P$ has the minimum gap $x_1$ is not adjacent to any vertex on $C'$.
If $x_1 \rightarrow x_i$ with $3\le i \le l$, then we
have a gap $P'=x_0x_1P[x_i,x_l]$ with minimum gap and $|V(R)\backslash V(P')|\ge 1$,
contradicting the assumption of this case. Therefore $x_1 \nrightarrow x_i$ for $3\le i \le l$.
Hence
\begin{equation} \label{eqn:degree_x1_in_R}
d_R(x_1)\le 2+|R|-2=|R|=n-k.
\end{equation}
Since $C$ is the longest nonhamiltonian cycle and $|C|\le n-3$, $x_1$ can not be inserted into $C$
and particulary $x_1$ can not be inserted into $C''=C[u_\alpha, u_0]$.
Note that $x_1 \nrightarrow x_l=u_\alpha$, by Lemma \ref{lem:VertexInsertToPath} (a),
\begin{equation} \label{eqn:degree_x1_in_C''}
d_{C''}(x_1)\le |C''|.
\end{equation}
By (\ref{eqn:degree_x1_in_R}) and (\ref{eqn:degree_x1_in_C''}), we have the following contradiction.
\begin{equation} \label{eqn:degree_x1}
n\le d(x_1)=d_R(x_1)+d_{C''}(x_1)+d_{C'}(x_1)\le n-k+|C''|+0\le n-k+k-1=n-1.
\end{equation}

Now we must have $\alpha = 1$, and hence $x_l=u_1$.

Consider the degree of $x_1$. $x_1$ is dominated by $x_0=u_0$ which
is on $C$, however $x_1$ can not be dominated by $u_{k-1}$, or we will have a $(n-1)$-cycle
$P[x_1,u_1]C[u_1,u_{k-1}]x_1$. Therefore neither $x_1 \rightarrow C$ nor $C \rightarrow x_1$.
By Lemma \ref{lem:wToC}, $d(x_1)\ge n$.

Since $x_1$ can not be inserted into $C$, by Lemma \ref{lem:VertexInsertToPath} (b),
$d_C(x_1)\le |C|$. If $x_1 \rightarrow x_i$ for any $3\le i \le l$,
we will have a bypass with the minimum gap but does not contain all vertices in $R$, contradicting the
assumption of the case. Therefore $d_R(x_1)\le |R|-1+1=|R|$. So $n\le d(x_1)=d_C(x_1)+d_R(x_1) \le |C|+|R|=n$
and all equalities must hold, in particular $d_C(x_1)=|C|$.

Since $u_0\rightarrow x_1$, $u_{k-1}\nrightarrow x_1$ and $d_C(x_1)=|C|$,
the exist some arcs from $x_1$ to $C$. Further since $x_1$ can not be inserted into $C$,
$x_1\nrightarrow u_1$. Let $j$ be that $x_1\rightarrow u_j$
but $x_1 \nrightarrow u_i$ for all $1\le i \le j-1$. Then $j\ge 2$.
Since $x_1$ can not be inserted into $C$, $u_{j-1}\nrightarrow x_1$ and $u_{j-1}$ is not
adjacent to $x_1$. Now let $0\le r \le j-1$ be that $u_r \rightarrow x_1$ but
$u_i$ is not adjacent to $x_1$ for all $r+1\le i \le j-1$.
Then $u_rx_1u_j$ is a bypass with a gap of length at least $2$, where $x_1$ is not adjacent
to the internal vertices of the gap.

If $j=r+2$, consider the degree sum of $x_1$ and $u_{r+1}$.
They are dominated by $u_r$ and nonadjacent, therefore $d(x_1), d(u_{r+1})\ge n$.
Since $|R|\ge 3$, for every $w\in R\backslash x_1$,
we can not have $x_1\rightarrow w \rightarrow u_{r+1}$ or $u_{r+1} \rightarrow w \rightarrow x_1$,
or we have a bypass with a smaller gap, contradicting the choice of $P$.
Hence $d_R(x_1)+d_R(u_{r+1})\le 2(|R|-1)$.
Note that $C[u_{r+2},u_r]x_1u_{r+2}$ is another longest nonhamiltonian cycle, therefore $u_{r+1}$ can not
be inserted into this cycle, in particular $u_{r+1}$ can not be inserted into the path $Q=C[u_{r+2},u_r]$.
By Lemma \ref{lem:VertexInsertToPath} (a), $d_C(u_{r+1})=d_{Q}(u_{r+1})\le |Q|+1=k$.
Together with $d_C(x_1)=|C|$, we have
$d(x_1)+d(u_{r+1})\le d_R(x_1)+d_R(u_{r+1})+d_C(x_1)+d_C(u_{r+1})\le 2(|R|-1)+2k=2n-2$, contradicting
$d(x_1), d(u_{r+1})\ge n$.

Therefore $j\ge r+3$. However, in this case we will have $d(x_1)\le n-1$ which is a contradiction.
Let $Q=C[u_j,u_{r}]$, $x_1$ can not be inserted into $Q$, hence by Lemma \ref{lem:VertexInsertToPath} (a),
$d_C(x_{1})=d_{Q}(x_1)\le |Q|+1=k-(j-r-1)+1\le k-1$. So $d(x_1)=d_C(x_{1})+d_R(x_{1}) \le |R|+k-1 = n-1$,
a contradiction that complete the proof of Case 1.2.

$\newline$

\textbf{Case 2}. $|C|=n-2$, and hence $|R|=2$.

In this case we apply Lemma \ref{lem:n-2Cycle}. Now that $C=u_0u_1\dots u_{n-3}u_0$ and we denote
the vertices not in $C$ by $v_0$ and $v_1$.

We prove that $d(v_0)\ge n$ and $d(v_1)\ge n$. Suppose to the contrary that at least one of them does not hold,
say $d(v_0)\le n-1$.

If $v_0$ is not adjacent to any vertex on $C$, then by the strongness of $D$, $v_0 \rightarrow v_1$, $v_1\rightarrow v_0$
and $v_1$ dominate and is dominated by some vertex on $C$. But then $v_0$ is contained in a nonadjacent dominated pair
and thus $d(v_0)\ge n$, a contradiction to the assumption $d(v_0)\le n-1$.
Hence, $v_0$ is adjacent to some vertex on $C$.

By Lemma \ref{lem:wToC}, $v_0\rightarrow C$ or $C\rightarrow v_0$. W.l.o.g assume that $v_0 \rightarrow C$.
By the strongness of $D$, there must exist a $0\le j \le k-3$ such that $u_j \rightarrow v_1 \rightarrow v_0$.
But then we have an $(n-1)$-cycle $v_0u_{j-2}Cu_jv_1v_0$, a contradiction.
Hence $d(v_0)\ge n$, and similarly $d(v_1)\ge n$.

Now $C$, $v_0$ and $v_1$ satisfy the condition of Lemma \ref{lem:n-2Cycle} (a), by the conclusion
of (a), $n$ is even and we may assume $v_0$ dominates and is dominated by precisely
$u_1 ,u_3,\dots ,u_{n-3}$, and $v_1$ dominates and is dominated by precisely $u_0 ,u_2 ,\dots,u_{n-4}$.
Then $u_2$ and $v_0$ are nonadjacent and both dominated by $u_1$, hence $d(u_2)\ge n$, and similarly
$d(u_i)\ge n$ for $0\le i \le n-3$. So we have $\delta(D)\ge n$ and the condition of Lemma \ref{lem:n-2Cycle} (b)
is satisfied, then we conclude that $D$ is isomorphic to $\overleftrightarrow{K}_{n/2,n/2}$, thus
complete the proof of Case 2 and the theorem.


\begin{thebibliography}{99}

\bibitem{BJG2008}
Bang-Jensen, J., \& Gutin, G. Z. (2008). Digraphs: theory, algorithms and applications. Springer Science \& Business Media.

\bibitem{BJGL1996}
Bang-Jensen, J., Gutin, G., \& Li, H. (1996). Sufficient conditions for a digraph to be Hamiltonian. Journal of Graph Theory, 22(2), 181--187.

\bibitem{BJHP1993}
Bang-Jensen, J., Huang, J., \& Prisner, E. (1993). In-tournament digraphs. Journal of Combinatorial Theory, Series B, 59(2), 267--287.

\bibitem{Bondy1973}
Bondy, J. A. (1973). Pancyclic graphs: recent results, Infinite and finite sets (Colloq., Keszthely, 1973; dedicated to P. Erdos on his 60th birthday), Vol. I, 181--187. In Colloq. Math. Soc. J\'{a}nos Bolyai (Vol. 10).

\bibitem{DK2017}
Darbinyan, S. K., \& Karapetyan, I. A. (2017). On longest non-Hamiltonian cycles in digraphs with the conditions of Bang-Jensen, Gutin and Li. Discrete Applied Mathematics, 216, 537--549.

\bibitem{Fan1984}
Fan, G. H. (1984). New sufficient conditions for cycles in graphs. Journal of Combinatorial Theory, Series B, 37(3), 221--227.

\bibitem{Guo1995}
Guo, Y. (1995). Locally semicomplete digraphs. Verlag der Augustinus-Buchhandlung.

\bibitem{HT1976}
H\"{a}ggkvist, R., \& Thomassen, C. (1976). On pancyclic digraphs. Journal of Combinatorial Theory, Series B, 20(1), 20--40.

\bibitem{Overbeck1977}
Overbeck-Larisch, M. (1977). A theorem on pancyclic oriented graphs, Journal of Combinatorial Theory, Series B, 23(2--3), 168--173.
\end{thebibliography}
\end{document}